\documentclass[12pt,tbtags,leqno]{amsart}

\usepackage{amssymb}
\usepackage{amsthm}
\usepackage{amsmath}
\usepackage{verbatim}
\usepackage[numbers,sort&compress]{natbib}

\numberwithin{equation}{section}

\newcommand{\HH}{\mathcal{H}}
\newcommand{\HK}{\mathcal{K}}
\newcommand{\HM}{\mathcal{M}}
\newcommand{\HE}{\mathcal{E}}
\newcommand{\HF}{\mathcal{F}}

\newcommand{\HD}{\mathcal{D}}

\newcommand{\HA}{\mathcal{A}}

\newcommand{\D}{\mathbb{D}}

\theoremstyle{plain}
\newtheorem{theorem}{Theorem}[section]
\newtheorem{lemma}[theorem]{Lemma}

\newtheorem{conjecture}[theorem]{Conjecture}
\newtheorem{prop}[theorem]{Proposition}

\theoremstyle{definition}

\makeatletter
\@namedef{subjclassname@2020}{%
  \textup{2020} Mathematics Subject Classification}
\makeatother

\begin{document}
\title[Vector-Valued Sub-Bergman Spaces]{Vector-Valued Sub-Bergman Spaces: Range Inclusions and Nonrigidity}

\author{Shuaibing Luo}
\address{School of Mathematics, Hunan University, Changsha, Hunan, 410082, PR China}
\email{sluo@hnu.edu.cn}

\author{Jiming Shen}
\address{School of Mathematics, Hunan University, Changsha, Hunan, 410082, PR China}
\email{jimishen@hnu.edu.cn}
\date{}

\subjclass[2020]{46E22, 47B35, 30H05}
\keywords{de Branges-Rovnyak space, sub-Bergman space,
Schur function, finite Blaschke-Potapov product}
\maketitle

\begin{abstract}
Let $\mathcal E$ be a separable Hilbert space,
$B\in H^\infty_1(L(\mathcal E))$, and $\alpha>-1$.  Associated
with the Toeplitz operator $T_B$ on the vector-valued weighted
Bergman space $A^2_{\alpha,\mathcal E}$ are the sub-Bergman spaces $\mathcal A_\alpha(B)$ and $\mathcal A_\alpha(B^*)$.
In this paper, we prove that
\[
 \mathcal A_\alpha(B)\supset
 A^2_{\alpha-1,\mathcal H(B(0))}
 \quad\text{and}\quad
 \mathcal A_\alpha(B^*)\supset
 A^2_{\alpha-1,\mathcal H(B(0)^*)}.
\]
This confirms a conjecture of \cite{GLM26}.
We also construct an infinite-dimensional
two-sided inner function $B$ such that $\|B(0)\|<1$ and
\[
 \mathcal A_\alpha(B)\approx
 \mathcal A_\alpha(B^*)\approx
 A^2_{\alpha-1,\mathcal E},
\]
but $B$ is not a finite Blaschke-Potapov product. This example shows that the finite-dimensional rigidity theorem of \cite{GLM26} does not extend to infinite-dimensional coefficient spaces.
\end{abstract}

\section{Introduction}
Let $\HH$ and $\HK$ be two separable Hilbert spaces.
For a contraction $A$ from $\HH$ to $\HK$, let $\HM(A)$ be the operator range of $A$ with the Hilbert space structure determined by
\[
\left\langle Ah_{1},Ah_{2}\right\rangle _{\mathcal{M}(A)}=\left\langle
h_{1},h_{2}\right\rangle _{\HH}, \quad h_{1}\in(\ker A)^{\perp}, h_{2}\in \HH.
\]
Equivalently, $A$ is a coisometry from $\HH$ onto $\HM(A)$.
The de Branges-Rovnyak space $\HH(A)$ is the space $\HM(D_{A^*})$ , where $D_{A^*} = (I-AA^*)^{1/2}$.
Let $H^\infty$ be the space of all bounded analytic functions on the unit disk $\D$, and $H^\infty_1$ be the unit ball of $H^\infty$.
When $\HH = \HK$ is the Hardy space $H^2$ on the unit disk, $A = T_b^{H^2}$ is the Toeplitz operator on $H^2$ with $b \in H^\infty_1$, the
de Branges-Rovnyak spaces $\HH(A)$ and $\HH(A^*)$ are also called sub-Hardy spaces (\cite{debranges, sarasonbook,hbspaces1fricainmashreghi}).
De Branges-Rovnyak spaces have a rich structure and play an important role in complex analysis and operator theory. In particular,
they are fundamental in the model theory of Hilbert space contractions (\cite{BallBoloBasics}) and analytic expansive operators (\cite{AM, LGR}).

Zhu introduced the Bergman-space analogues of these spaces in
\cite{Zhu96, Zhu03}. If $b \in H^\infty_1$, the defect operators of multiplication by $b$ and its
adjoint on a Bergman space give two sub-Bergman spaces, $\HA(b)$ and $\HA(\overline{b})$.
Their structure is subtler than that of the corresponding sub-Hardy
spaces, in part because invariant subspaces of the Bergman shift are
considerably more complicated. This construction extends naturally to de Branges-Rovnyak
spaces contractively contained in the weighted Bergman space $A_{\alpha}^{2}, \alpha>-1$.
If $A = T_b^{A_{\alpha}^{2}}$, the resulting spaces $\HH(A)$ and $\HH(A^*)$ are denoted by $\HA_\alpha(b)$ and $\HA_\alpha(\overline{b})$, respectively.
When $\alpha = 0$, we have $\HA_0(b) = \HA(b)$, $\HA_0(\overline{b}) = \HA(\overline{b})$.
Since the work of Zhu \cite{Zhu96, Zhu03}, there have been
several papers answering Zhu's questions and extending his results, see e.g.
\cite{Abkar, AD, chu1, chu2, LZ, Nowak, RS20, Sultanic, Symesak}.

The study of vector-valued sub-Bergman spaces was initiated in \cite{GLM26}. For $\alpha > -1$ and a
separable Hilbert space $\mathcal E$, let $A^2_{\alpha,\mathcal E}$ denote
the $\mathcal E$-valued reproducing kernel Hilbert space with kernel
\[
 K_\alpha(z,w)
 =\frac{I_{\mathcal E}}{(1-z\overline w)^{\alpha+2}}.
\]
Let $H^\infty_1(L(\mathcal E))$ denote the unit ball of
$H^\infty(L(\mathcal E))$.
For
$B\in H^\infty_1(L(\mathcal E))$, multiplication
by $B$ is a contraction on $A^2_{\alpha,\mathcal E}$, and we define
\[
 \mathcal A_\alpha(B)
 =\mathcal M\bigl((I-T_BT_B^*)^{1/2}\bigr),
 \qquad
 \mathcal A_\alpha(B^*)
 =\mathcal M\bigl((I-T_B^*T_B)^{1/2}\bigr).
\]
In contrast with the scalar setting, these two spaces must be treated
separately: the noncommutativity of operator-valued symbols can make
their behavior quite different.  Infinite-dimensional coefficient
spaces introduce a further difficulty, since the ranges of the
pointwise defect operators need not be closed.  Moreover, pure
contractivity of $B$ need not imply $\|B(0)\|<1$.

In \cite{GLM26}, the authors give a criterion
for both sub-Bergman spaces to contain the full space
$A^2_{\alpha-1,\mathcal E}$ and precise description when $B$ is a
finite Blaschke-Potapov product. More specifically, if
$B(z)=\prod_{j=1}^n Q(z,a_j,\mathcal F_j,\mathcal E)$ is a
finite Blaschke-Potapov product (see Section 2 for the definition), then
\[
 \mathcal A_\alpha(B)
 \approx A^2_{\alpha-1,\mathcal H(B(0))},
 \quad
 \mathcal A_\alpha(B^*)
 \approx A^2_{\alpha-1,\mathcal H(B(0)^*)},
\]
where $\approx$ denotes equality as sets.  These results lead the authors
of \cite{GLM26} to conjecture that, when $\mathcal E$ is infinite
dimensional, every operator-valued Schur function $B$ satisfies
\[
 \mathcal A_\alpha(B)\supset
 A^2_{\alpha-1,\mathcal H(B(0))},
 \quad
 \mathcal A_\alpha(B^*)\supset
 A^2_{\alpha-1,\mathcal H(B(0)^*)}.
\]
See \cite[Conjecture 4.18]{GLM26}.
The goal of this paper is to prove this conjecture
without imposing any finite-dimensional assumption on $\mathcal E$ and
without assuming that $B$ is inner or rational. Our main result is the following.
\begin{theorem}
\label{mainthm1}
Let $\HE$ be a separable Hilbert space.
Let $B\in H^{\infty}_1(L(\HE))$. Then for $\alpha>-1,$
$$\mathcal{A}_{\alpha}(B)\supseteq
A_{\alpha-1,\HE_{1}}^{2}, \quad \mathcal{A}%
_{\alpha}(B^{\ast})\supseteq A_{\alpha-1,\HE_{2}}^{2},$$
where $\HE_{1}=\mathcal{H}(B(0))$ and $\HE_{2}%
=\mathcal{H}(B(0)^{\ast})$.
\end{theorem}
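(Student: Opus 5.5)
The plan is to reduce both inclusions to inequalities between positive operator-valued kernels. We use the standard range-inclusion principle: for contractions $A,C$ into the same space, $\HM(C)\subseteq\HM(A)$ if and only if $CC^*\le \lambda AA^*$ for some $\lambda>0$. For reproducing kernel spaces of $\HE$-valued functions this reads: $\HH_{K_1}\subseteq \HH_{K_2}$ if and only if $\lambda K_2-K_1$ is a positive kernel. We work with the following kernels.
\begin{itemize}
\item The kernel of $\mathcal A_\alpha(B)$ is $K_B(z,w)=\dfrac{I-B(z)B(w)^*}{(1-z\overline w)^{\alpha+2}}$.
\item Write $D_*=(I-B(0)B(0)^*)^{1/2}$. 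Then $\HE_1=\HM(D_*)$, and the space $A^2_{\alpha-1,\HE_1}$, viewed inside the $\HE$-valued functions, is the space with kernel $\dfrac{D_*^2}{(1-z\overline w)^{\alpha+1}}$.
\end{itemize}
The first claim therefore amounts to
\[
\frac{D_*^2}{(1-z\overline w)^{\alpha+1}}\le \lambda\,\frac{I-B(z)B(w)^*}{(1-z\overline w)^{\alpha+2}} .
\]
The claim for $\mathcal A_\alpha(B^*)$ will be obtained by the same argument applied to $\tilde B(z)=B(\overline z)^*$. For this we first identify $\mathcal A_\alpha(B^*)$ through the identity $I-T_B^*T_B$ and the standard unitary $f\mapsto \overline{f(\overline z)}$-type flip. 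If that identification is awkward, we repeat the argument directly with the operator $I-T_B^*T_B$, using Berezin-type test functions.

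Step 1 is a Hardy-level lower bound. Factor
\[
K_B(z,w)=k_B(z,w)\cdot\frac{1}{(1-z\overline w)^{\alpha+1}},\qquad k_B(z,w)=\frac{I-B(z)B(w)^*}{1-z\overline w},
\]
where $k_B$ is the de Branges--Rovnyak kernel of $\HH(B)$. Compressing $k_B$ to the span of the kernel functions at $0$ gives a lower bound
\[
k_B(z,w)\ \ge\ F(z)F(w)^*,\qquad F(z)=(I-B(z)B(0)^*)D_*^{-1}\ \text{on }\overline{\operatorname{ran}}D_* .
\]
Because $D_*$ need not have closed range, we make this rigorous by applying it to vectors $D_*x$, for which $F(z)D_*x=(I-B(z)B(0)^*)x$. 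Schur products preserve positivity, so multiplying by $(1-z\overline w)^{-(\alpha+1)}$ gives $K_B\ge F(z)\,(1-z\overline w)^{-(\alpha+1)}F(w)^*$. Consequently $\mathcal A_\alpha(B)$ contains $\{(I-B(z)B(0)^*)g\}$ for every $g$ with $D_*g\in A^2_{\alpha-1,\HE_1}$, contractively in the appropriate norm.

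Step 2 is to pass from $(I-BB(0)^*)g$ to $D_*^2g$. We write
\[
D_*^2 g=(I-B(z)B(0)^*)g+(B(z)-B(0))B(0)^*g,
\]
and note that $B(z)-B(0)=zG(z)$ with $G\in H^\infty(L(\HE))$ by the operator Schwarz lemma. The remaining task is to show that $zG\,B(0)^*g\in\mathcal A_\alpha(B)$ with control in the norm of $g$. For this we use the other part of the kernel. The identity
\[
K_B(z,w)-z\overline w K_B(z,w)=\frac{I-B(z)B(w)^*}{(1-z\overline w)^{\alpha+1}}
\]
shows that the extra factor $(1-z\overline w)^{-1}$ in $K_B$, compared with the target exponent $\alpha+1$, supplies exactly one extra power of $z\overline w$. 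We then exploit $|zG(z)|$-type estimates together with the positivity of $\dfrac{z\overline w}{(1-z\overline w)^{\alpha+2}}$. Concretely, we aim for a kernel inequality
\[
z G(z)B(0)^*\,\frac{D_*^{-2}\cdots}{(1-z\overline w)^{\alpha+1}}\,B(0)G(w)^*\overline w\ \le\ C\,K_B(z,w).
\]
The cleanest route may be to compare the whole target directly via
\[
\frac{D_*^2}{(1-z\overline w)^{\alpha+1}}=\frac{D_*^2}{(1-z\overline w)^{\alpha+2}}-\frac{z\overline w D_*^2}{(1-z\overline w)^{\alpha+2}},
\]
combined with a Rosenblum--Rovnyak style estimate $\|T_B^*K_\alpha(\cdot,w)x\|$ near the boundary.

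Step 2 is the main obstacle. The term $(B(z)-B(0))B(0)^*$ is where the non-closed range of $D_*$ and the absence of the condition $\|B(0)\|<1$ bite. A naive bound would require $\|B(0)\|<1$, or $D_*$ invertible, which is not assumed. The first thing we will try is to avoid inverting $D_*$ altogether. We will work with the test vectors $D_*x$ and the representation
\[
D_*^2x=(I-B(z)B(0)^*)x+zG(z)B(0)^*x .
\]
Then we prove the kernel inequality in the form
\[
\sum_{i,j}\langle D_*^2x_j,x_i\rangle (1-z_i\overline{z_j})^{-(\alpha+1)}\le\lambda\sum_{i,j}\langle K_B(z_i,z_j)x_j,x_i\rangle,
\]
estimating the cross terms by Cauchy--Schwarz and absorbing the $zG$ contribution into the positive kernel $\dfrac{z\overline w(I-B(z)B(w)^*)}{(1-z\overline w)^{\alpha+2}}$.
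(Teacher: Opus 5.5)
Your reduction to kernel domination is right, and Step 1 is correct. Compressing the de Branges--Rovnyak kernel to the span of the kernel functions at $0$, then taking the Schur product with $(1-z\overline w)^{-(\alpha+1)}$, shows that $(I-B(z)B(0)^*)g\in\HA_\alpha(B)$ with norm at most $\|D_*g\|_{A^2_{\alpha-1,\HE}}$.

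But Step 1 settles only easy cases, e.g.\ $\|B(0)\|<1$ with $\alpha\ge 0$, where $F^{-1}=D_*(I-B(z)B(0)^*)^{-1}$ is a bounded analytic multiplier of $A^2_{\alpha-1,\HE}$. The content of the statement is the case $\|B(0)\|=1$, which occurs for pure $B$ once $\dim\HE=\infty$. There, Step 2 is not an auxiliary estimate but the entire theorem. Given Step 1, the inclusion $A^2_{\alpha-1,\HE_1}\subseteq\HA_\alpha(B)$ is equivalent to $\|(B-B(0))B(0)^*g\|_{\HA_\alpha(B)}\le C\|D_*g\|_{A^2_{\alpha-1,\HE}}$. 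A bound in terms of $\|g\|$ would only give $A^2_{\alpha-1,\HM(D_*^2)}$, which is strictly smaller when $\operatorname{ran}D_*$ is not closed.

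Since $B(0)^*D_*^{-1}$ is unbounded when $\|B(0)\|=1$, such a bound needs a cancellation that your proposal never exhibits. Cauchy--Schwarz on cross terms, ``absorbing'' into $z\overline w(I-B(z)B(w)^*)(1-z\overline w)^{-\alpha-2}$, and Rosenblum--Rovnyak estimates are listed as things to try, not carried out. So nothing beyond the easy case is proved.

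Your plan for $\HA_\alpha(B^*)$ also fails as stated. A flip $f\mapsto\mathcal{C}f(\overline z)$, with $\mathcal{C}$ a conjugation on $\HE$, carries $T_B$ to $T_{B^\#}$, where $B^\#(z)=\mathcal{C}B(\overline z)\mathcal{C}$ is analytic. It therefore sends $I-T_B^*T_B$ to $I-T_{B^\#}^*T_{B^\#}$, not to $I-T_{\tilde B}T_{\tilde B}^*$. Already for $B(z)=z$ on $A^2$, where $\tilde B=B$, these two operators have eigenvalues $\{1/(n+2)\}_{n\ge0}$ and $\{1/(n+1)\}_{n\ge0}$, so no unitary or antiunitary map intertwines them. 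Step 1 also has no evident analogue for the kernel of $\HA_\alpha(B^*)$, which is an integral of $I-B(u)^*B(u)$ rather than a de Branges--Rovnyak kernel times $(1-z\overline w)^{-(\alpha+1)}$.

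The missing idea is a pointwise lower bound on the defect that survives $\|B(0)\|=1$, fed through the Toeplitz calculus on $A^2_{\alpha,\HE}$ rather than the Hardy-level kernel.

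First, apply Schwarz--Pick to the scalar functions $w\mapsto\ell(B(w)x)$, with $\ell$ a norming functional for $B(z)x$. This gives $I-B(z)^*B(z)\ge\frac{1-|z|^2}{4}(I-B(0)^*B(0))$. Applying the same bound to $C(w)=B(\overline w)^*$ gives the analogous bound for $I-B(z)B(z)^*$ in terms of $I-B(0)B(0)^*$; this pointwise step is the only place the flip is legitimately used.

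Next, $I-T_B^*T_B=T_{I-B^*B}$ and $I-T_BT_B^*=T_{I-BB^*}+H_{B^*}^*H_{B^*}\ge T_{I-BB^*}$. Hence both defect operators dominate $\frac14T_{(1-|z|^2)D^2}$, with $D=(I-B(0)^*B(0))^{1/2}$, respectively $D=(I-B(0)B(0)^*)^{1/2}$.

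Finally, $T_{1-|z|^2}z^n=\frac{\alpha+1}{n+\alpha+2}z^n$. So the reproducing kernel of $\HM\bigl(T_{(1-|z|^2)D^2}^{1/2}\bigr)$ lies between $\frac{\alpha+1}{\alpha+2}$ and $1$ times $D^2(1-z\overline w)^{-(\alpha+1)}$, which is the kernel of $A^2_{\alpha-1,\HM(D)}$. Douglas's lemma then yields both inclusions for every $\alpha>-1$, without ever inverting $D$ or estimating $B(0)^*g$.
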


The key ingredient to prove the above theorem is a Harnack-type estimate which retains the pointwise
defect at the origin.  We show that for every $z\in\mathbb D$,
\[
\begin{aligned}
 I_{\mathcal E}-B(z)^*B(z)
 &\geq \frac{1-|z|^2}{4}
       \bigl(I_{\mathcal E}-B(0)^*B(0)\bigr),\\
 I_{\mathcal E}-B(z)B(z)^*
 &\geq \frac{1-|z|^2}{4}
       \bigl(I_{\mathcal E}-B(0)B(0)^*\bigr).
\end{aligned}
\]

Theorem \ref{mainthm1} also gives short proofs of two structural results from
\cite{GLM26}. First,
\[
 \mathcal A_\alpha(B)\supset A^2_{\alpha-1,\mathcal E}
 \quad\Longleftrightarrow\quad
 \|B(0)\|<1
 \quad\Longleftrightarrow\quad
 \mathcal A_\alpha(B^*)\supset A^2_{\alpha-1,\mathcal E}.
\]
Second, $\mathcal A_\alpha(B)$ and $\mathcal A_\alpha(B^*)$ are dense
in $A^2_{\alpha,\mathcal E}$ if and only if $B$ is purely contractive.

Finally, we show that a rigidity phenomenon established in
\cite[Theorem 1.4]{GLM26} is genuinely finite dimensional.
\begin{theorem}\label{mainthm2}
There exist an
infinite dimensional Hilbert space $\HE$ and a function $B\in H^{\infty}_1(L(\HE))$
such that for every $\alpha>-1$,
\[
 \HA_\alpha(B)
 \approx
 \HA_\alpha(B^*)
 \approx
 A^2_{\alpha-1,\HE},
\]
while $B$ is not a finite Blaschke-Potapov product.
\end{theorem}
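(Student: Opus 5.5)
We would take $\HE=\ell^2$ with orthonormal basis $\{e_n\}$ and let $B$ be a diagonal operator-valued function
\[
B(z)e_n=b_{a_n}(z)e_n,\qquad b_a(z)=\frac{z-a}{1-\overline{a}z},
\]
where $\{a_n\}$ is a sequence of \emph{distinct} points with $|a_n|\le 1/2$. Then $B$ is two-sided inner, $\HE$ is infinite dimensional, and $\|B(0)\|=\sup_n|a_n|\le 1/2<1$.

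\emph{$B$ is not a finite Blaschke-Potapov product.} $B(z)$ fails to be invertible at every $a_n$, hence at infinitely many points of $\D$. On the other hand, each factor $Q(z,a,\HF,\HE)$ is invertible for $z\neq a$. So a finite product is invertible off a finite set, and this rules it out.

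\emph{Inclusions $\supseteq$.} Since $\|B(0)\|<1$, the space $\HH(B(0))$ coincides with $\HE$ as a set, with equivalent norm. Likewise $\HH(B(0)^*)\approx\HE$. Theorem \ref{mainthm1} then gives
\[
A^2_{\alpha-1,\HE}\subseteq\HA_\alpha(B)\quad\text{and}\quad A^2_{\alpha-1,\HE}\subseteq\HA_\alpha(B^*).
\]

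\emph{Inclusions $\subseteq$.} Because $B$ is diagonal, $I-T_BT_B^*$ and $I-T_B^*T_B$ split as orthogonal direct sums of the scalar operators $I-T_{b_{a_n}}T_{b_{a_n}}^*$ and $I-T_{b_{a_n}}^*T_{b_{a_n}}$ on $A^2_\alpha$. Hence $\HA_\alpha(B)=\bigoplus_n\HA_\alpha(b_{a_n})$ and $\HA_\alpha(B^*)=\bigoplus_n\HA_\alpha(\overline{b_{a_n}})$. It therefore suffices to show that $\HA_\alpha(b_a)$ and $\HA_\alpha(\overline{b_a})$ embed into $A^2_{\alpha-1}$ with norm bounds \emph{uniform} in $|a|\le1/2$.

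For $\HA_\alpha(b_a)$ we compute the reproducing kernel. Using $T_b^*K^\alpha_w=\overline{b(w)}K^\alpha_w$ and the identity
\[
1-b_a(z)\overline{b_a(w)}=\frac{(1-|a|^2)(1-z\overline w)}{(1-\overline a z)(1-a\overline w)},
\]
the kernel of $\HA_\alpha(b_a)$ is $\varphi_a(z)\overline{\varphi_a(w)}(1-z\overline w)^{-(\alpha+1)}$ with
\[
\varphi_a(z)=\frac{(1-|a|^2)^{1/2}}{1-\overline a z}.
\]
Thus $\HA_\alpha(b_a)=\varphi_aA^2_{\alpha-1}$ isometrically. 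For $|a|\le 1/2$, both $\varphi_a$ and $1/\varphi_a$ are bounded multipliers of $A^2_{\alpha-1}$, with bounds independent of $a$. This gives the uniform embedding.

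For $\HA_\alpha(\overline{b_a})$ we use that analytic Toeplitz operators on $A^2_\alpha$ are subnormal, hence hyponormal: $T_b^*T_b\ge T_bT_b^*$. Therefore $I-T_b^*T_b\le I-T_bT_b^*$, so $\HA_\alpha(\overline b)$ is contractively contained in $\HA_\alpha(b)$. The uniform bound just obtained then transfers.

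Summing over $n$ gives $\HA_\alpha(B),\HA_\alpha(B^*)\subseteq A^2_{\alpha-1,\HE}$ with bounded inclusion, which completes the set equalities. The main obstacle is precisely this uniformity in $n$: the whole point of the construction is keeping $\{a_n\}$ in a compact subdisk. The explicit kernel formula together with hyponormality is what controls it, and we would double-check the kernel computation and the direct-sum decomposition of the defect operators carefully.
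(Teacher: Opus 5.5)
Your proposal is correct and follows essentially the paper's proof. It uses the same diagonal example with zeros in a compact subdisk, and Theorem~\ref{mainthm1} (via $\|B(0)\|<1$) for the inclusions $\supseteq$. It uses the same kernel factorization $F^B_\alpha(z,w)=L(z)(1-z\overline{w})^{-(\alpha+1)}L(w)^*$ with $L$, $L^{-1}$ bounded multipliers; the paper's $L(z)=D_A(I_\HE-zA)^{-1}$ is exactly the diagonal of your $\varphi_{a_n}$, so it gets your uniformity in $n$ for free from $\|A\|<1$. It also uses normality of $B(z)$, which is your hyponormality argument, for $\HA_\alpha(B^*)\subseteq\HA_\alpha(B)$.

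The only genuine difference is how a finite Blaschke--Potapov product is ruled out. You use non-invertibility of $B$ at the infinitely many points $a_n$, while the paper counts the distinct poles $1/a_n$ of the matrix coefficients $\langle B(z)e_n,e_n\rangle$. Both arguments work.
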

In \cite[Theorem 1.4]{GLM26}, it is proved that when $\dim \HE < \infty$, then
\[
 \HA_\alpha(B)
 \approx
 \HA_\alpha(B^*)
 \approx
 A^2_{\alpha-1,\HE},
\]
if and only if $B$ is a finite Blaschke-Potapov product and pure. The above result shows that
the finite-dimensional characterization in
\cite[Theorem 1.4]{GLM26} cannot be extended to
infinite-dimensional coefficient spaces.

The paper is organized as follows.  In Section~2, we first review operator ranges,
the relevant reproducing kernels, and the pure/unitary decomposition of
an operator-valued Schur function. Then we prove a Harnack-type inequality and Theorem \ref{mainthm1},
 and derive several consequences.
In Section~3, we construct the infinite-dimensional example that proves Theorem \ref{mainthm2}.

\section{Characterization of $\HA_\alpha(B)$ and $\HA_\alpha(B^*)$}
Let $\HD$ and $\HE$ be separable Hilbert spaces and $L(\HD, \HE)$ the space of bounded linear operators from $\HD$ to $\HE$. Let $H^\infty(L(\HD, \HE))$ be the $L(\HD, \HE)$-valued $H^\infty$ functions and $H^\infty_1(L(\HD, \HE))$ the unit ball of $H^\infty(L(\HD, \HE))$. If $B \in H^\infty_1(L(\HD, \HE))$, then we say that $B$ is a contractive operator-valued analytic function. For $\alpha > -1$, let $A^2_{\alpha, \HE}$ be the $\HE$-valued Bergman space on the unit disk $\D$ whose reproducing kernel is given by
\begin{align*}
K_\alpha(z,w) = \frac{I_\HE}{(1-z\overline{w})^{\alpha+2}}, \quad z, w \in \D.
\end{align*}
Let $B \in H^\infty_1(L(\HD, \HE))$. Then the Toeplitz operator $T_B$ is a contraction from $A^2_{\alpha, \HD}$ to $A^2_{\alpha, \HE}$. We thus have the de Branges-Rovnyak spaces $\HH(T_B) = \HM(D_{T_B^*})$ and $\HH(T_B^*) = \HM(D_{T_B})$, which we denote by $\HA_\alpha(B)$ and $\HA_\alpha(B^*)$, respectively. The reproducing kernels of $\HA_\alpha(B)$ and $\HA_\alpha(B^*)$ are
\begin{align}\label{reprokernelaabbs}
&F_\alpha^B(z,w) = \frac{I_\HE - B(z) B(w)^*}{(1-z\overline{w})^{\alpha+2}},\\
&F_\alpha^{B^*}(z,w) =\int_{\mathbb{D}}\frac{I_\HD-B^{\ast}(u)B(u)}%
{(1-z\overline{u})^{\alpha+2}(1-u\overline{w})^{\alpha+2}}dA_{\alpha}(u), \quad z, w \in \D, \notag
\end{align}
respectively, see \cite[Theorem 3.1, Proposition 3.2]{GLM26}.

Let $B \in H^\infty_1(L(\HD, \HE))$. The function $B$ is called purely contractive (or just pure) if $\|B(0) v\| < \|v\|$ for all $v \in \HD\backslash \{0\}$. Any contractive $B \in H^\infty_1(L(\HD, \HE))$ admits a decomposition such that
$$B(z) = B_1(z) \oplus U: \HD = \HD_1 \oplus \HD_2 \rightarrow \HE = \HE_1 \oplus \HE_2,$$
where $B_1 \in H^\infty_1(L(\HD_1, \HE_1))$ is pure and $U$ is a constant unitary from $\HD_2$ onto $\HE_2$. In fact,
\begin{align}\label{unitarypart}
\HD_2 = \{v \in \HD: v = B(0)^*B(0)v\}, \quad \HE_2 = \{v \in \HE: v = B(0)B(0)^*v\}.
\end{align}
The $B_1$ is referred to be the purely contractive part and $U$ is the unitary part of $B$, see \cite[Page 194]{SFBK10}. Thus with respect to the decomposition $A^2_{\alpha,\HE} = A^2_{\alpha,\HE_1} \oplus A^2_{\alpha,\HE_2}$, we have
\begin{align}\label{decomdefect}
I - T_B T_B^* = (I - T_{B_1}T_{B_1}^*) \oplus 0.
\end{align}
So $\HA_\alpha(B) = \HA_\alpha(B_1) \oplus \{0\} \subseteq A^2_{\alpha,\HE_1} \oplus A^2_{\alpha,\HE_2}$. Similarly, $\HA_\alpha(B^*) = \HA_\alpha(B_1^*) \oplus \{0\} \subseteq A^2_{\alpha,\HD_1} \oplus A^2_{\alpha,\HD_2}$.
\begin{lemma}\label{pureope}
Let $B \in H^\infty_1(L(\HD, \HE))$. Then
\begin{itemize}
\item[(i)] $\HH(B(0))$ is dense in $\HE$ if and only if $B$ is pure; this is also equivalent to the statement that $\HH(B(0)^*)$ is dense in $\HD$.
\item[(ii)] $\HH(B(0)) = \HE$ if and only if $\|B(0)\| <1$; this is also equivalent to $\HH(B(0)^*) = \HD$.
\end{itemize}
\end{lemma}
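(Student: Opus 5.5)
The whole lemma reduces to facts about the single contraction $T=B(0)\colon \HD\to\HE$. We use the standard operator-range facts: $\HH(T)=\HM(D_{T^*})$ is the range of $D_{T^*}$, and $\HH(T^*)$ is the range of $D_T$, where $D_{T^*}=(I-TT^*)^{1/2}$ and $D_T=(I-T^*T)^{1/2}$. Both defect operators are positive, so we reason with kernels and closures of ranges.

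For (i), the range of the positive operator $D_{T^*}$ is dense in $\HE$ if and only if $\ker D_{T^*}=\{0\}$, since $\overline{\operatorname{ran}}\,D_{T^*}=(\ker D_{T^*})^{\perp}$. Moreover
\[
\ker D_{T^*}=\ker(I-TT^*)=\{v\in\HE: v=TT^*v\},
\]
which is exactly $\HE_2$ in \eqref{unitarypart}. In the same way, $\HH(T^*)$ is dense in $\HD$ if and only if $\HD_2=\ker(I-T^*T)=\{0\}$.

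It remains to link these conditions to purity.

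1. Purity means $\|Tv\|<\|v\|$ for all $v\neq 0$. This is equivalent to $\ker(I-T^*T)=\{0\}$, because $\langle (I-T^*T)v,v\rangle=\|v\|^2-\|Tv\|^2$, and for a positive operator $\langle Av,v\rangle=0$ forces $Av=0$. So purity is equivalent to $\HD_2=\{0\}$.
2. $T$ maps $\HD_2$ unitarily onto $\HE_2$. Indeed, $T^*T=I$ on $\HD_2$ makes $T$ isometric there. For $v\in\HE_2$ we have $T^*v\in\HD_2$ (since $T^*T(T^*v)=T^*v$), and $T(T^*v)=v$, giving surjectivity.
3. Hence $\HD_2=\{0\}$ if and only if $\HE_2=\{0\}$. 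This is the decomposition $B=B_1\oplus U$ already quoted, and it completes (i).

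For (ii), the operator-range fact is that $\operatorname{ran}A=\HE$ for a positive $A$ if and only if $A$ is bounded below, i.e.\ invertible. Applying this to $A=D_{T^*}$, we get $\HH(T)=\HE$ if and only if $I-TT^*\ge \varepsilon I$ for some $\varepsilon>0$. Indeed, $D_{T^*}$ invertible is equivalent to $D_{T^*}^2=I-TT^*$ invertible, since $\sigma(D^2)=\{\lambda^2:\lambda\in\sigma(D)\}$. The condition $I-TT^*\ge\varepsilon I$ is equivalent to $\|T^*\|^2\le 1-\varepsilon$, i.e.\ $\|T\|=\|T^*\|<1$. The same argument with $D_T$ shows $\HH(T^*)=\HD$ if and only if $\|T\|<1$.

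The only step needing care is the claim that a positive operator with full range is invertible. Because $D$ is self-adjoint with closed range equal to $\HE$, we get $\ker D=(\operatorname{ran}D)^{\perp}=\{0\}$. So $D$ is a bijection of $\HE$, and the bounded inverse theorem gives invertibility.

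Everything else is routine spectral calculus. I do not expect a real obstacle; the one point to check is step 2, the identification of $\HE_2$ with the unitary image of $\HD_2$.
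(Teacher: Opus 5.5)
Your proof is correct and follows essentially the same route as the paper. For (i), both reduce density to triviality of $\ker(I-B(0)B(0)^*)=\HE_2$ (resp.\ $\HD_2$) and link this to purity through the unitary part; you package it as $B(0)\colon\HD_2\to\HE_2$ being unitary, where the paper uses $B(0)^*v\in\HD_2$. For (ii), both use that the positive defect operator has full range exactly when it is bounded below, i.e.\ invertible. The only thing you leave implicit is the one-line check $B(0)\HD_2\subseteq\HE_2$ (from $TT^*(Tv)=T(T^*Tv)=Tv$), which the direction $\HE_2=\{0\}\Rightarrow\HD_2=\{0\}$ needs.
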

\begin{proof}
(i) Note that $\HH(B(0))$ is the range of the positive operator $(I_\HE - B(0)B(0)^*)^{1/2}$ on $\HE$. If $B$ is not pure, then it is clear that $\HH(B(0))$ is not dense in $\HE$. If $B$ is pure, we claim that $I_\HE - B(0)B(0)^*$ is one-to-one. Note that $v \in \ker(I_\HE - B(0)B(0)^*)$ if and only if $v \in \HE_2$, where $\HE_2$ is defined by (\ref{unitarypart}). Let $v \in \HE_2$. If $v \neq 0$, then $B(0)^* v \neq 0$. Thus $B(0)^* B(0) B(0)^* v = B(0)^* v$, and so $B(0)^* v \in \HD_2$. But since $B$ is pure, we must have $B(0)^* v =0$. So $\HE_2 = 0$ and $I_\HE - B(0)B(0)^*$ is one-to-one. Thus $\HH(B(0))$ is dense in $\HE$. Similarly, $\HH(B(0)^*)$ is dense in $\HD$ if and only if $B$ is pure.

(ii) If $\|B(0)\| <1$, then $I_\HE - B(0)B(0)^*$ is invertible and $\HH(B(0)) = \HE$. If $\HH(B(0)) = \HE$, then the positive operator $(I_\HE - B(0)B(0)^*)^{1/2}$ is bounded below and $\|B(0)\| <1$. Similarly, $\|B(0)\| <1$ is equivalent to $\HH(B(0)^*) = \HD$.
\end{proof}

\subsection{Finite Blaschke-Potapov product}

Let $\HE$ be a separable Hilbert space.
Recall that $B \in H^\infty_1(L(\HE))$ is (left) inner if $B(z)^*B(z) = I_\HE$ for a.e. $|z| = 1$. Let $\HF \subseteq \HE$ be a closed subspace of $\HE$ and $P_\HF$ the orthogonal projection from $\HE$ onto $\HF$. The inner function
$$Q(z,a,\HF,\HE) = \varphi_a(z) (I_\HE - P_\HF) + P_\HF$$
is called a Blaschke-Potapov factor, where $\varphi_a(z) = \frac{z-a}{1-\overline{a}z}, a \in \D$. In our discussion, when we say $Q(z,a,\HF,\HE)$ is a Blaschke-Potapov factor we always assume that $\HF \neq \HE$. $B$ is called a finite Blaschke-Potapov product if there are closed subspaces $\HF_i \subseteq \HE$, $a_i \in \D, i = 1, \ldots, n$ and a unitary operator $U \in L(\HE)$ such that
\begin{align}\label{finiteblapot}
B(z) = U \prod_{i=1}^n Q(z,a_i,\HF_i,\HE).
\end{align}
Note that $\HA_\alpha(B) = U \HA_\alpha(U^*B)$, we will assume $U = I_\HE$ in the following. Let $B$ be a finite Blaschke-Potapov product defined by (\ref{finiteblapot}) with $U = I_\HE$. Then by \cite[Lemma 4.4]{GLM26},
\begin{align}\label{finitedecomp}
B(z) = B_1(z) \oplus I_{\HF'}: \HE = \HE' \oplus \HF' \rightarrow \HE' \oplus \HF',
\end{align}
where $B_1(z) = \prod_{i=1}^n Q(z,a_i,\HF_i',\HE')$ is the purely contractive part and the identity $I_{\HF'}$ is the unitary part, $\HF' = \cap_{i=1}^n \HF_i, \HE' = \HE \ominus \HF'$, $\HF_i' = \HF_i \ominus \HF', i = 1,\ldots, n$.
By the decomposition (\ref{finitedecomp}) and Lemma \ref{pureope}, we have $\HH(B(0))$ and $\HH(B(0)^*)$ are dense subspaces of $\HE'$. Thus when $\HE$ is finite dimensional, then
\begin{align*}
\HH(B(0)) = \HH(B(0)^*) = \HE'  = \HF_1^\perp + \ldots + \HF_n^\perp.
\end{align*}
In fact, when $\HE$ is a general Hilbert space and $B$ is a finite Blaschke-Potapov product, it always holds that
\begin{align*}
\HH(B(0)) = \HH(B(0)^*) = \HF_1^\perp + \ldots + \HF_n^\perp,
\end{align*}
see \cite[Lemma 4.8]{GLM26}.

It was proved in \cite[Theorems 4.12 and 4.13]{GLM26} that if $B(z) = \prod_{i=1}^n Q(z,a_i,\HF_i,\HE)$ is a finite Blaschke-Potapov product, then for $\alpha > -1$,
\begin{align*}
\mathcal{A}_{\alpha}(B)\approx A_{\alpha-1,\HE_{1}}^{2}, \quad \mathcal{A}_{\alpha}(B^{\ast})\approx A_{\alpha-1,\HE_{2}}^{2},
\end{align*}
where $X \approx Y$ means they equal as a set,
$$\HE_1 = \HH(B(0)) = \HF_1^\perp + \ldots + \HF_n^\perp = \HH(B(0)^*) = \HE_2.$$
Consequently, if $B(z) = \prod_{i=1}^n Q(z,a_i,\HF_i,\HE)$ is a finite Blaschke-Potapov product, then for $\alpha>-1,$ $\mathcal{A}_{\alpha}(B)\supset
A_{\alpha-1,\HE_{1}}^{2}$ and $\mathcal{A}_{\alpha}(B^{\ast})\supset
A_{\alpha-1,\HE_{1}}^{2}$ for some dense linear manifold $\HE_{1}$ of $\HE$ if and only
if $B$ is pure, if and only if $\cap_{i=1}^{n}F_{i}=\{0\}.$ See \cite[Corollary 4.17]{GLM26}. This leads the authors of \cite{GLM26} to formualte the following conjecture.
\begin{conjecture}[\cite{GLM26}, Conjecture 4.18]
\label{conjecGLM}
Let $B\in H^{\infty}_1(L(\HE))$. If $\HE$ is infinite
dimensional, then for\ each $\alpha>-1,$ $\mathcal{A}_{\alpha}(B)\supseteq
A_{\alpha-1,\HE_{1}}^{2}$ where $\HE_{1}=\mathcal{H}(B(0))$ and $\mathcal{A}%
_{\alpha}(B^{\ast})\supseteq A_{\alpha-1,\HE_{2}}^{2}$ where $\HE_{2}%
=\mathcal{H}(B(0)^{\ast})$.
\end{conjecture}

\subsection{General contractive function}
In this subsection, we will prove Conjecture \ref{conjecGLM}. We need the following Harnack-type inequality.

\begin{lemma}
\label{harnackineq}
Let $B \in H^\infty_1(L(\HE))$. Then for every $z \in \D$,
\begin{align}\label{ineqharnt}
I_{\mathcal{E}} - B(z)^* B(z) &\geq \frac{1 - |z|^2}{4} \left( I_{\mathcal{E}} - B(0)^* B(0) \right), \\
I_{\mathcal{E}} - B(z) B(z)^* &\geq \frac{1 - |z|^2}{4} \left( I_{\mathcal{E}} - B(0) B(0)^* \right). \notag
\end{align}
\end{lemma}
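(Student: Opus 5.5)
The plan is to obtain both inequalities from the positivity of de Branges--Rovnyak type kernels, tested at only two points, $0$ and $z$. No invertibility of $I_\HE - B(0)^*B(0)$ is needed, which matters because in infinite dimensions that defect operator may have non-closed range.

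\emph{Step 1: positive kernels.} Since $B$ is a contractive multiplier of $H^2_\HE$, the kernel
\[
K(z,w)=\frac{I_\HE - B(z)B(w)^*}{1-z\overline w}
\]
is positive on $\D\times\D$. The function $\widetilde B(z)=B(\overline z)^*$ also lies in $H^\infty_1(L(\HE))$. Applying the same fact to $\widetilde B$ and substituting $z\mapsto\overline z$, $w\mapsto \overline w$ shows that
\[
L(z,w)=\frac{I_\HE - B(w)^*B(z)}{1-\overline w z}
\]
is also a positive kernel. We use $L$ for the first inequality in (\ref{ineqharnt}) and $K$ for the second.

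\emph{Step 2: a $2\times 2$ test.} Fix $z$, write $A=B(0)$, and fix $x\in\HE$. Positivity of $L$ at the points $z$ and $0$, tested on the vectors $x$ and $x$, gives a positive semidefinite scalar $2\times2$ matrix. Its entries are
\[
a=\langle (I-A^*A)x,x\rangle,\qquad \frac{b}{1-|z|^2}\ \text{ with }\ b=\langle (I-B(z)^*B(z))x,x\rangle,
\]
and the off-diagonal entry $c=\langle (I-A^*B(z))x,x\rangle$. Hence $|c|^2\le ab/(1-|z|^2)$. To bound $|c|$ from below, use
\[
\operatorname{Re} c=\|x\|^2-\operatorname{Re}\langle B(z)x,Ax\rangle \;\ge\; \|x\|^2-\tfrac12\bigl(\|B(z)x\|^2+\|Ax\|^2\bigr)=\tfrac12(a+b).
\]
Therefore $\tfrac14(a+b)^2\le ab/(1-|z|^2)$.

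\emph{Step 3: conclude.} Since $a,b\ge0$, we have $\tfrac14 a^2\le \tfrac14(a+b)^2\le ab/(1-|z|^2)$. If $a>0$, divide by $a$ to get $b\ge \tfrac{1-|z|^2}{4}\,a$. If $a=0$ the inequality is trivial. This holds for every $x$, which is exactly the first inequality in (\ref{ineqharnt}). The second inequality follows by the same argument with $K$ in place of $L$: the off-diagonal entry becomes $\langle (I-B(z)A^*)x,x\rangle$, and the same AM--GM step, now applied to $\|B(z)^*x\|$ and $\|A^*x\|$, completes it. Alternatively, apply the first inequality to $\widetilde B$.

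The only step needing care is Step 2. There one must choose the test vectors equal and bound the real part of the off-diagonal term from below by the average of the two defects, rather than trying to invert $I-A^*A$. The remaining steps are routine.
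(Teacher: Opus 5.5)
Your proof is correct, but it takes a different route from the paper's.

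\textbf{Comparison of the two routes.} The paper fixes a unit vector $x$ and picks a norming functional $\ell$ for $B(z)x$. It then applies the scalar Schwarz--Pick lemma to $h=\ell(B(\cdot)x)\in H^\infty_1$. The only operator information it uses is $|h(z)|=\|B(z)x\|$ and $|h(0)|\le\|B(0)x\|$, which yields $1-\|B(z)x\|^2\ge\frac{1-|z|}{1+|z|}(1-\|B(0)x\|^2)$. The second inequality is obtained from $C(w)=B(\overline w)^*$.

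You instead use positivity of the operator-valued de Branges--Rovnyak kernels, compressed to a scalar kernel with a single vector $x$ at the two points $0,z$. This keeps both $\|B(0)x\|$ and $\|B(z)x\|$, and you control the cross term $\operatorname{Re}\langle B(z)x,B(0)x\rangle$ by AM--GM.

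The paper's argument needs nothing beyond the scalar Schwarz lemma and Hilbert-space duality. Yours relies on the kernel characterization of contractive multipliers of $H^2_{\mathcal{E}}$, but it avoids choosing a functional and the separate case $|h(0)|=1$. Also, if you solve $(1-|z|^2)(a+b)^2\le 4ab$ for $b/a$ instead of discarding $b$, you get exactly $b\ge\frac{1-|z|}{1+|z|}\,a$, the paper's intermediate constant.

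\textbf{An inaccuracy in Step 1.} The substitution $z\mapsto\overline z$, $w\mapsto\overline w$ in the kernel of $\widetilde B$ gives
$M(z,w)=\frac{I-B(z)^*B(w)}{1-\overline z w}$, not your $L$. Your $L(z,w)=M(w,z)$ is in general not a positive operator kernel, because interchanging the arguments amounts to a block transpose of the Pick matrix.

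For example, take $0<\alpha<1$, $s=\sqrt{1-\alpha^2}$, and $B(z)=\begin{pmatrix} sz&\alpha\\ -\alpha z^2& sz\end{pmatrix}$, which is inner. For $t\in(0,1)$, the block Pick matrix $\begin{pmatrix}L(0,0)&L(0,t)\\ L(t,0)&L(t,t)\end{pmatrix}$ has Schur complement $\alpha^2t^2\,\mathrm{diag}(1,-s^2)$ with respect to $L(0,0)$, so it is not positive semidefinite.

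This does not damage your proof. You only test with the same vector $x$ at both points, and $\langle L(z,w)x,x\rangle=\langle M(z,w)^*x,x\rangle=\overline{\langle M(z,w)x,x\rangle}$ is the complex conjugate of a positive scalar kernel, hence itself positive. So your $2\times 2$ matrix is positive semidefinite, with the same entries $a$, $b/(1-|z|^2)$, $c$, $\overline c$. For the second inequality, $K$ is a genuinely positive kernel and no correction is needed.
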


\begin{proof}
Fix \(z \in \mathbb{D}\) and a unit vector \(x \in \mathcal{E}\). Choose a norm-one functional \(\ell \in \mathcal{E}^*\) such that
\[
|\ell(B(z)x)| = \|B(z)x\|.
\]
Let
\[
h(w) = \ell(B(w)x), \quad w \in \mathbb{D}.
\]
Then \(h \in H_1^\infty\). Set \(a = h(0)\). If \(|a| = 1\), then
\[
1 - |h(z)|^2 \geq 0 = \frac{1 - |z|}{1 + |z|}(1 - |a|^2).
\]
If $a\in\mathbb{D}$, let $\omega=\frac{h(z)-a}{1-\overline{a}h(z)}$, then by Schwarz lemma, $|\omega|\leq |z|$.
Since $h(z)=\frac{a+\omega}{1+\overline{a}\omega}$, we obtain
\[
\begin{aligned}
1-|h(z)|^{2} &= \frac{(1-|a|^{2})(1-|\omega|^{2})}{|1+\overline{a}\omega|^{2}} \\
&\geq \frac{(1-|a|^{2})(1-|z|^{2})}{(1+|a||z|)^{2}} \\
&\geq \frac{1-|z|}{1+|z|}\bigl(1-|a|^{2}\bigr).
\end{aligned}
\]
Moreover, $|a|=|\ell(B(0)x)|\leq \|B(0)x\|$. Hence
\[
\begin{aligned}
\bigl\langle (I_{\mathcal{E}}-B(z)^{*}B(z))x,x\bigr\rangle
&= 1-\|B(z)x\|^{2} \\
&= 1-|h(z)|^{2} \\
&\geq \frac{1-|z|}{1+|z|}\bigl(1-\|B(0)x\|^{2}\bigr) \\
&= \frac{1-|z|}{1+|z|}\bigl\langle (I_{\mathcal{E}}-B(0)^{*}B(0))x,x\bigr\rangle.
\end{aligned}
\]
Since this holds for every unit vector $x$, we obtain
\[
I_{\mathcal{E}}-B(z)^{*}B(z) \geq \frac{1-|z|}{1+|z|}\bigl(I_{\mathcal{E}}-B(0)^{*}B(0)\bigr).
\]
Note that
\[
\frac{1-|z|}{1+|z|}\geq \frac{1-|z|^{2}}{4},
\]
Thus the first inequality in (\ref{ineqharnt}) holds.

Now let $C(w) = B(\overline{w})^*, w \in \D$. Then $C \in H^\infty_1(L(\HE))$. So for every $z \in \D$,
\begin{align*}
I_\HE - B(\overline{z}) B(\overline{z})^* = I_\HE-C(z)^*C(z)
 \geq
 \frac{1-|z|^2}{4}
 \bigl(I_\HE-B(0)B(0)^*\bigr).
\end{align*}
Replacing $z$ by $\overline{z}$, we obtain the second inequality in (\ref{ineqharnt}).
\end{proof}

Now we can prove Conjecture \ref{conjecGLM}. We restate Theorem \ref{mainthm1} in the following.

\begin{theorem}
\label{conjecproof}
Let $\HE$ be a separable Hilbert space. Let $B \in H^\infty_1(L(\HE))$. Then for $\alpha > -1$,
\begin{align*}
\HA_\alpha(B) \supseteq A^2_{\alpha-1,\HE_1}, \quad \HA_\alpha(B^*) \supseteq A^2_{\alpha-1,\HE_2},
\end{align*}
where $\HE_1 = \HH(B(0))$, $\HE_2 = \HH(B(0)^*)$.
\end{theorem}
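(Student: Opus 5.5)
We will prove the inclusions through kernel domination, using the reproducing kernels (\ref{reprokernelaabbs}) and the Harnack-type inequality of Lemma \ref{harnackineq}. We use the standard fact that for reproducing kernel Hilbert spaces with operator-valued kernels, $\mathcal H(K_1)\subseteq \mathcal H(K_2)$ (contractively, up to a constant) whenever $c K_2 - K_1$ is a positive kernel. First we describe $A^2_{\alpha-1,\HE_1}$ as a reproducing kernel space. Since $\HE_1=\HM(D_{B(0)^*})$ with $D_{B(0)^*}=(I-B(0)B(0)^*)^{1/2}$, the space $A^2_{\alpha-1,\HE_1}$ equals the range of $I\otimes D_{B(0)^*}$ acting on $A^2_{\alpha-1,\HE}$, with the range norm. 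Its kernel is therefore
\[
\frac{I_\HE-B(0)B(0)^*}{(1-z\overline w)^{\alpha+1}}.
\]
In the case $\alpha-1\le -1$ we interpret $A^2_{\alpha-1}$ through its kernel $(1-z\overline w)^{-(\alpha+1)}$, which is still positive since $\alpha+1>0$. So it suffices to find $c>0$ with
\[
c\,\frac{I-B(z)B(w)^*}{(1-z\overline w)^{\alpha+2}}-\frac{I-B(0)B(0)^*}{(1-z\overline w)^{\alpha+1}}\ \ge 0
\]
as a kernel, together with the analogue for $F^{B^*}_\alpha$.

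For $\HA_\alpha(B)$ we factor out the common term $(1-z\overline w)^{-(\alpha+1)}$, which is a positive kernel. By the Schur product theorem (valid for a scalar positive kernel times an operator positive kernel), it is enough to show that
\[
c\,\frac{I-B(z)B(w)^*}{1-z\overline w}-(I-B(0)B(0)^*)
\]
is positive. The Hardy-type kernel $\frac{I-B(z)B(w)^*}{1-z\overline w}$ is the kernel of the de Branges--Rovnyak space $\HH(T_B^{H^2})$. The constant kernel $I-B(0)B(0)^*$ is the kernel of the constants $\HE_1$ with the $\HM(D_{B(0)^*})$ norm. The needed positivity is therefore the statement that constant functions $D_{B(0)^*}e$ lie in $\HH(T_B^{H^2})$ with norm at most $\sqrt c\|e\|$. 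This is classical. The function $(I-B(z)B(0)^*)e$ is the kernel at $0$, with squared norm $\langle(I-B(0)B(0)^*)e,e\rangle$. The function $B(z)B(0)^*e=T_B(B(0)^*e)$ lies in $\HH(T_B)$ with norm controlled by $\|D_{B(0)^*}\|$-type quantities, because $T_B$ maps $\HH(T_B^*)$ into $\HH(T_B)$. I expect this step to be the main obstacle, and I will verify it carefully. As an alternative that is fully kernel-theoretic, I will compare with the kernel at $w=0$ via the Cauchy--Schwarz-type inequality $K(z,0)K(0,0)^{-1}K(0,w)\le K(z,w)$, handled with pseudo-inverses. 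This gives $c=1$ with $(I-B(z)B(0)^*)(I-B(0)B(0)^*)^{-1}(I-B(0)B(w)^*)$ on the left, and I will then try to compare that with the constant kernel.

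For $\HA_\alpha(B^*)$ the kernel is an integral, and here Lemma \ref{harnackineq} enters directly. Using $I-B(u)^*B(u)\ge \frac{1-|u|^2}{4}(I-B(0)^*B(0))$ inside the integral, and noting that the map $G\mapsto \int G(u)\,k_z(u)\overline{k_w(u)}\,dA_\alpha(u)$ preserves kernel positivity for pointwise-positive $G$, we get $F_\alpha^{B^*}(z,w)\ge \frac14 (I-B(0)^*B(0))\,J(z,w)$ as kernels, where
\[
J(z,w)=\int_\D\frac{1-|u|^2}{(1-z\overline u)^{\alpha+2}(1-u\overline w)^{\alpha+2}}\,dA_\alpha(u).
\]
We expand in monomials. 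Writing $\int|u|^{2n}dA_\alpha\asymp (n+1)^{-(\alpha+1)}$ and $\int|u|^{2n}(1-|u|^2)dA_\alpha\asymp (n+1)^{-(\alpha+2)}$, the coefficients of $J$ are comparable to $(n+1)^{2(\alpha+1)}(n+1)^{-(\alpha+2)}=(n+1)^{\alpha}$, which is the coefficient size of $(1-z\overline w)^{-(\alpha+1)}$. Hence $J\ge c\,(1-z\overline w)^{-(\alpha+1)}$ as kernels, since both are diagonal in monomials. This gives $A^2_{\alpha-1,\HE_2}\subseteq\HA_\alpha(B^*)$.

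The same integral argument also handles $\HA_\alpha(B)$, and so can replace the obstacle step above. We use the second Harnack inequality together with the representation of $F^B_\alpha$ as the kernel of $\HM((I-T_BT_B^*)^{1/2})$, that is, $I-T_BT_B^*\ge$ the Toeplitz-type operator with symbol $I-B(u)B(u)^*$. This holds because $T_BT_B^*\le T_{BB^*}$, where $T_{BB^*}$ is compression of multiplication. Since $(T_{I-BB^*}K_w e)(z)=\int \frac{(I-B(u)B(u)^*)e}{(1-z\overline u)^{\alpha+2}(1-u\overline w)^{\alpha+2}}dA_\alpha(u)$, the same computation finishes the proof, and I will use this route in place of the de Branges--Rovnyak argument.
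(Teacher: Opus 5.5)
Your final route is correct and is essentially the paper's proof: the pointwise Harnack bounds, $I-T_B^*T_B=T_{I-B^*B}$ and $I-T_BT_B^*\ge T_{I-BB^*}$, then comparing the kernel of $T_{(1-|u|^2)G^2}$ (coefficients $c_{n,\alpha}\frac{\alpha+1}{n+\alpha+2}\asymp (n+1)^{\alpha}$) with $G^2(1-z\overline{w})^{-(\alpha+1)}$ and concluding by kernel domination; the paper simply computes these coefficients exactly. Discard the Hardy-space de Branges--Rovnyak step rather than keeping it as an alternative, because it is false, not classical: for $B=\varphi_a$ with $a\neq 0$, the space $\HH(T_B^{H^2})=H^2\ominus\varphi_aH^2$ is spanned by $1/(1-\overline{a}z)$ and contains no nonzero constants, even though $D_{B(0)^*}\neq 0$.
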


\begin{proof}
Let
\[
A = B(0), \qquad D_{A^*} = (I_{\mathcal{E}} - AA^*)^{1/2}, \qquad D_A = (I_{\mathcal{E}} - A^*A)^{1/2}.
\]
Then
\[
\mathcal{E}_1 = \mathcal{M}(D_{A^*}), \qquad \mathcal{E}_2 = \mathcal{M}(D_A).
\]
Since \(B\) is analytic, we have \(I - T_B^* T_B = T_{I_{\mathcal{E}} - B^* B}\). Thus (\ref{ineqharnt}) implies
\begin{equation}\label{ineqimtbstb}
I - T_B^* T_B \geq \frac{1}{4} T_{(1 - |z|^2) D_A^2}.
\end{equation}
Let $L^2_{\alpha, \HE}$ be the $\HE$-valued $L^2_\alpha$ space on $\D$. Let $P_\alpha$ be the orthogonal projection from $L^2_{\alpha, \HE}$ onto $A^2_{\alpha,\HE}$. We define the Hankel operator $H_{B^*}$ by
\begin{align*}
H_{B^*}f=(I-P_{\alpha})(B^{\ast}f), \quad f\in A_{\alpha,E}^{2}.
\end{align*}
Then one checks that
\begin{align}\label{toephanlident}
 T_{BB^*}-T_BT_B^*=H_{B^*}^*H_{B^*}\geq0.
\end{align}
Thus
\begin{align}\label{inebeimtbtbsdas}
 I-T_BT_B^*
 &=T_{I_\HE-BB^*}+H_{B^*}^*H_{B^*}\\
 &\geq T_{I_\HE-BB^*} \notag\\
 &\geq \frac14 T_{(1-|z|^2)D_{A^*}^2}.\notag
\end{align}

Let $G=D_{A^*}$ on $\mathcal{E}$ and $Q_G = T_{(1-|z|^2)G^2}$ on $A^2_{\alpha,\mathcal{E}}$.
We now determine $\mathcal{M}(Q_G^{1/2})$. Note that
\[
K_\alpha(z,w)=\frac{1}{(1-z\overline{w})^{\alpha+2}}
=\sum_{n=0}^{\infty} c_{n,\alpha}(z\overline{w})^n,\quad
c_{n,\alpha}=\frac{\Gamma(n+\alpha+2)}{n!\,\Gamma(\alpha+2)}.
\]
Furthermore,
\[
T_{1-|z|^2} z^n = \frac{\alpha+1}{n+\alpha+2}\, z^n.
\]
Therefore the reproducing kernel of $\mathcal{M}(Q_G^{1/2})$ is
\[
\begin{aligned}
L_G(z,w) &= T_{(1-|z|^2)G^2} K_\alpha(z,w)\\
&=\sum_{n=0}^{\infty} c_{n,\alpha}\,\frac{\alpha+1}{n+\alpha+2}\,(z\overline{w})^n G^2\\
&=\sum_{n=0}^{\infty} \frac{n+\alpha+1}{n+\alpha+2}\,c_{n,\alpha-1}\,(z\overline{w})^n G^2.
\end{aligned}
\]
Since
\[
\frac{\alpha+1}{\alpha+2}\le \frac{n+\alpha+1}{n+\alpha+2}<1,
\]
we obtain
\[
\frac{\alpha + 1}{\alpha + 2} \frac{G^2}{(1 - z\overline{w})^{\alpha+1}} \preceq L_G(z, w) \preceq \frac{G^2}{(1 - z\overline{w})^{\alpha+1}},
\]
where \(R(z, w) \preceq K(z, w)\) means \(K(z, w) - R(z, w)\) is positive semi-definite, i.e. a kernel.
Note that \(\mathcal{M}(G) = \mathcal{M}(D_{A^*}) = \mathcal{E}_1\), and the reproducing kernel \(\frac{G^2}{(1 - z\overline{w})^{\alpha+1}}\) determines the
space \(A^2_{\alpha-1, \mathcal{E}_1}\). It thus follows that
\[
\mathcal{M}(Q_G^{1/2}) \approx A^2_{\alpha-1, \mathcal{E}_1},
\]
see e.g. \cite{PaulsenRag}.
Thus by (\ref{inebeimtbtbsdas}) and Douglas range inclusion lemma (\cite{Do66}), we obtain
\[
A^2_{\alpha-1, \mathcal{E}_1} \approx \mathcal{M}(Q_G^{1/2}) \subseteq \mathcal{M}((I - T_B T_B^*)^{1/2}) = \mathcal{A}_\alpha(B).
\]
Similarly, by using (\ref{ineqimtbstb}), we obtain that
\[
A^2_{\alpha-1, \mathcal{E}_2} \approx \mathcal{M}((T_{(1 - |z|^2) D_A^2})^{1/2}) \subseteq \mathcal{M}((I - T_B^* T_B)^{1/2}) = \mathcal{A}_\alpha(B^*).
\]
The proof is complete.
\end{proof}

\subsection{Consequences of Theorem \ref{conjecproof}}
We conclude this section with short proofs of two results from \cite{GLM26}.

\begin{theorem}[\cite{GLM26}, Theorems 3.14 and 3.16]\label{mainGLM}
Let $B\in H^{\infty}_1(L(\HE)), \alpha > -1$. Then $\mathcal{A}_{\alpha}(B)\supseteq A_{\alpha-1,\HE}^{2}$ if and only if
$\left\Vert B(0)\right\Vert <1$, which is equivalent to $\mathcal{A}_{\alpha}(B^*)\supseteq A_{\alpha-1,\HE}^{2}$.
\end{theorem}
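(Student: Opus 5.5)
We prove the two implications separately. The sufficiency of $\|B(0)\|<1$ follows from Theorem \ref{conjecproof}; the necessity comes from evaluating reproducing kernels at a single point.

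\emph{Sufficiency.} Suppose $\|B(0)\|<1$. By Lemma \ref{pureope}(ii), $\HE_1=\HH(B(0))=\HE$ and $\HE_2=\HH(B(0)^*)=\HE$ as sets. The norms on $\HE_i$ are equivalent to the norm of $\HE$, since $D_{A^*}$ and $D_A$ are invertible. Hence $A^2_{\alpha-1,\HE_i}\approx A^2_{\alpha-1,\HE}$, and Theorem \ref{conjecproof} gives both $\HA_\alpha(B)\supseteq A^2_{\alpha-1,\HE}$ and $\HA_\alpha(B^*)\supseteq A^2_{\alpha-1,\HE}$.

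\emph{Necessity for $\HA_\alpha(B)$.} Assume $\HA_\alpha(B)\supseteq A^2_{\alpha-1,\HE}$.
\begin{itemize}
\item Both spaces are reproducing kernel Hilbert spaces contractively contained in $A^2_{\alpha,\HE}$. By the closed graph theorem, the inclusion $A^2_{\alpha-1,\HE}\hookrightarrow\HA_\alpha(B)$ is bounded, say with norm $c$.
\item Hence there is $\delta=c^{-2}>0$ such that $F^B_\alpha(z,w)-\delta\,(1-z\overline w)^{-(\alpha+1)}I_\HE$ is a positive kernel. This is the standard kernel-domination criterion for contractive inclusions of RKHS \cite{PaulsenRag}.
\item Evaluate on the diagonal at $z=w=0$, using (\ref{reprokernelaabbs}). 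This gives $I_\HE-B(0)B(0)^*\ge\delta I_\HE$, so $\|B(0)^*\|<1$ and therefore $\|B(0)\|<1$.
\end{itemize}

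\emph{Necessity for $\HA_\alpha(B^*)$.} This is the step needing more care, since the kernel $F^{B^*}_\alpha$ is only given by an integral. The kernel domination step works in the same way and gives $F^{B^*}_\alpha(0,0)\ge\delta I_\HE$. We have
\[
F^{B^*}_\alpha(0,0)=\int_\D (I-B(u)^*B(u))\,dA_\alpha(u)=P_0(I-T_B^*T_B)|_{\text{constants}},
\]
and we must deduce $\|B(0)\|<1$ from the lower bound on this average.

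I expect this to be the main obstacle, because the lower bound controls an average of $I-B(u)^*B(u)$ over the disk rather than $I-B(0)^*B(0)$ directly. The plan is to argue by contradiction, through the following steps.
\begin{itemize}
\item Suppose $\|B(0)\|=1$. Pick unit vectors $x_n$ with $\|B(0)x_n\|\to1$.
\item Apply the scalar argument from the proof of Lemma \ref{harnackineq} in reverse. Choose functionals $\ell_n$ of norm one so that $h_n(w)=\ell_n(B(w)x_n)$ satisfies $|h_n(0)|\to1$.
\item By Harnack/Schwarz–Pick, $1-|h_n(u)|^2\le \frac{1+|u|}{1-|u|}(1-|h_n(0)|^2)$. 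So $h_n(u)$ stays near the unit circle uniformly on compacta, and $\|B(u)x_n\|\ge|h_n(u)|\to1$ uniformly on compacta.
\item Since $1-\|B(u)x_n\|^2\le1$, dominated convergence gives $\int_\D(1-\|B(u)x_n\|^2)\,dA_\alpha(u)\to0$.
\item This contradicts $\langle F^{B^*}_\alpha(0,0)x_n,x_n\rangle\ge\delta$.
\end{itemize}
Hence $\|B(0)\|<1$, which completes the equivalence.
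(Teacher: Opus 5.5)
Your proof is correct. Sufficiency and the necessity for $\HA_\alpha(B)$ are exactly the paper's argument: Lemma~\ref{pureope}(ii) plus Theorem~\ref{conjecproof}, and kernel domination evaluated at $z=w=0$.

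For $\HA_\alpha(B^*)$ the paper only writes ``similarly.'' You rightly notice that $F^{B^*}_\alpha(0,0)=\int_\D(I_\HE-B(u)^*B(u))\,dA_\alpha(u)$ is an average, so one more step is genuinely needed. Your reverse Harnack bound $1-|h_n(u)|^2\le\frac{1+|u|}{1-|u|}(1-|h_n(0)|^2)$ is valid, and the dominated-convergence contradiction goes through.

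That step can be shortened considerably, though. Since $dA_\alpha$ is a radial probability measure, the mean value property shows that $B(0)x$ is the orthogonal projection of $Bx$ onto the constants in $A^2_{\alpha,\HE}$. Hence
\[
\|B(0)x\|^2\le\int_\D\|B(u)x\|^2\,dA_\alpha(u)=\|x\|^2-\bigl\langle F^{B^*}_\alpha(0,0)x,x\bigr\rangle\le(1-\delta)\|x\|^2 .
\]
This gives $\|B(0)\|\le\sqrt{1-\delta}$ directly, with an explicit bound and no contradiction argument or Harnack estimate.
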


\begin{proof}
Lemma \ref{pureope} implies that if $\|B(0)\| < 1$, then $\HH(B(0)) = \HE = \HH(B(0)^*)$. Thus if $\|B(0)\| < 1$, then Theorem \ref{conjecproof} ensures that
$\mathcal{A}_{\alpha}(B)\supseteq A_{\alpha-1,\HE}^{2}$ and $\mathcal{A}_{\alpha}(B^*)\supseteq A_{\alpha-1,\HE}^{2}$.

Note that $\mathcal{A}_{\alpha}(B)\supseteq A_{\alpha-1,\HE}^{2}$ if and only if there is some constant $C > 0$ such that
\begin{align*}
F_\alpha^{B}(z,w) = \frac{I_\HE - B(z) B(w)^*}{(1-z\overline{w})^{\alpha+2}} \succeq C \frac{I_{\HE}}{(1-z\overline{w})^{\alpha+1}},
\end{align*}
where $F_\alpha^{B}(z,w) = \frac{I_\HE - B(z) B(w)^*}{(1-z\overline{w})^{\alpha+2}}$ is the reproducing kernel of $\mathcal{A}_{\alpha}(B)$ given by (\ref{reprokernelaabbs}). Thus if $\mathcal{A}_{\alpha}(B)\supseteq A_{\alpha-1,\HE}^{2}$, then by letting $z = w = 0$ in the above expression, we obtain that
\begin{align*}
I_\HE - B(0) B(0)^* \geq C I_\HE.
\end{align*}
It then follows that $\|B(0)\| < 1$.

Similarly, if $\mathcal{A}_{\alpha}(B^*)\supseteq A_{\alpha-1,\HE}^{2}$, then $\|B(0)\| < 1$. The proof is complete.
\end{proof}

\begin{prop}[\cite{GLM26}, Proposition 3.20]
\label{dense}
Let $B\in H^{\infty}_1(L(\HE)), \alpha > -1$. Then $\mathcal{A}_{\alpha}(B)$ and
$\mathcal{A}_{\alpha}(B^{\ast})$ are dense subspaces of
$A_{\alpha,\HE}^{2}$ if and only if $B$ is pure.
\end{prop}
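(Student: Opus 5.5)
We prove both directions, using Theorem \ref{conjecproof} for sufficiency and the unitary-part decomposition (\ref{decomdefect}) for necessity.

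\emph{Sufficiency.} Suppose $B$ is pure. By Lemma \ref{pureope}(i), $\HE_1=\HH(B(0))$ and $\HE_2=\HH(B(0)^*)$ are dense linear manifolds in $\HE$. By Theorem \ref{conjecproof}, $\HA_\alpha(B)\supseteq A^2_{\alpha-1,\HE_1}$. It therefore suffices to show that $A^2_{\alpha-1,\HE_1}$ is dense in $A^2_{\alpha,\HE}$.

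The space $A^2_{\alpha-1,\HE_1}=\HM(\cdot)$ has kernel $G^2/(1-z\overline w)^{\alpha+1}$ with $G=D_{A^*}$. Hence it contains all polynomials $\sum_n z^n e_n$ with $e_n\in \HM(G)=\HE_1$. Indeed, $A^2_{\alpha-1,\HE_1}$ is the image of $A^2_{\alpha-1,\HE}$ under pointwise multiplication by $G$, and monomials $z^n e$ lie in $A^2_{\alpha-1,\HE}$.

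Polynomials with coefficients in $\HE$ are dense in $A^2_{\alpha,\HE}$. Each $\HE$-coefficient can be approximated in norm by elements of the dense set $\HE_1$. Since $\|z^n e\|_{A^2_\alpha}=c_{n,\alpha}^{-1/2}\|e\|$, this approximation is continuous in each coefficient. So polynomials with $\HE_1$ coefficients are dense in $A^2_{\alpha,\HE}$, and hence so is $\HA_\alpha(B)$. The same argument with $\HE_2$ and the second inclusion of Theorem \ref{conjecproof} gives density of $\HA_\alpha(B^*)$.

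\emph{Necessity.} Suppose $B$ is not pure. Write $B=B_1\oplus U$ with nonzero unitary part $U:\HD_2\to\HE_2$; by (\ref{unitarypart}) and the proof of Lemma \ref{pureope}, $\HE_2\neq 0$ and $\HD_2\neq0$. By (\ref{decomdefect}), $\HA_\alpha(B)=\HA_\alpha(B_1)\oplus\{0\}$. This space is contained in $A^2_{\alpha,\HE_1'}$, where $\HE_1'=\HE\ominus\HE_2$ is a proper closed subspace of $\HE$. A closed proper subspace of $A^2_{\alpha,\HE}$ cannot contain a dense set, so $\HA_\alpha(B)$ is not dense. Similarly, $\HA_\alpha(B^*)\subseteq A^2_{\alpha,\HD\ominus\HD_2}$ is not dense.

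In fact either density already forces purity, so the stated equivalence holds in the strong form: $\HA_\alpha(B)$ dense $\iff$ $B$ pure $\iff$ $\HA_\alpha(B^*)$ dense.

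The only step requiring care is the approximation step in sufficiency: checking that $A^2_{\alpha-1,\HE_1}$ really contains the monomials $z^n e$ for $e\in\HE_1$. This follows from the kernel identification in the proof of Theorem \ref{conjecproof}, since the map $f\mapsto Gf$ from $A^2_{\alpha-1,\HE}$ is a coisometry onto that space. The rest is routine.
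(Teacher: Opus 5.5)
Your proof is correct and follows the same route as the paper: necessity from the pure/unitary decomposition (\ref{decomdefect}), and sufficiency from Theorem \ref{conjecproof} combined with Lemma \ref{pureope}(i). You also spell out the step the paper leaves implicit, namely that $A^2_{\alpha-1,\HE_1}$ is dense in $A^2_{\alpha,\HE}$ when $\HE_1$ is dense in $\HE$, because it contains every polynomial with $\HE_1$-coefficients.
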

\begin{proof}
By the decomposition (\ref{decomdefect}), we see that the necessity holds.

The sufficiency part follows from Theorem \ref{conjecproof} and Lemma \ref{pureope}.
\end{proof}

\section{An example in infinite dimension}
The following finite dimensional rigidity theorem was proved in \cite[Theorem 1.4]{GLM26}.

\begin{theorem}[\cite{GLM26}, Theorem 1.4]
	If $B\in H^{\infty}_1(L(\HE))$ and $\HE$ is finite dimensional, then
	$\mathcal{A}_{\alpha}(B)\approx\mathcal{A}_{\alpha}(B^{\ast})\approx A_{\alpha-1,\HE}^{2}$ for $\alpha>-1$ if and
	only if $B$ is a finite Blaschke-Potapov product with $\left\Vert
	B(0)\right\Vert <1.$
\end{theorem}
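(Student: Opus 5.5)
The \emph{if} direction should follow directly from earlier results. The \emph{only if} direction I would attack by a determinant argument, using only the diagonal of the kernel inclusion and Potapov's factorization of matrix inner functions.

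\emph{Sufficiency.} Suppose $B=U\prod_{i=1}^n Q(z,a_i,\HF_i,\HE)$ with $\|B(0)\|<1$. Since $\HA_\alpha(B)=U\HA_\alpha(U^*B)$, we may take $U=I_\HE$. By Lemma \ref{pureope}(ii), $\HH(B(0))=\HH(B(0)^*)=\HE$. Then \cite[Theorems 4.12 and 4.13]{GLM26}, as recalled in Section 2, give
\[
\HA_\alpha(B)\approx A^2_{\alpha-1,\HE}\approx \HA_\alpha(B^*).
\]

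\emph{Necessity, step 1.} Assume $\HA_\alpha(B)\approx\HA_\alpha(B^*)\approx A^2_{\alpha-1,\HE}$. Theorem \ref{mainGLM} already gives $\|B(0)\|<1$, which in particular makes $B$ pure. It remains to show that $B$ is a finite Blaschke--Potapov product. From the inclusion $\HA_\alpha(B)\subseteq A^2_{\alpha-1,\HE}$ and the closed graph theorem, the inclusion is bounded. Hence there is $C>0$ with
\[
\frac{I_\HE-B(z)B(w)^*}{(1-z\overline w)^{\alpha+2}}\preceq C\,\frac{I_\HE}{(1-z\overline w)^{\alpha+1}}.
\]
Restricting to the diagonal $z=w$ gives
\[
I_\HE-B(z)B(z)^*\le C(1-|z|^2)I_\HE .
\]
So every eigenvalue of $B(z)B(z)^*$ is at least $1-C(1-|z|^2)$. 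With $n=\dim\HE<\infty$, this yields
\[
|\det B(z)|^2\ge \bigl(1-C(1-|z|^2)\bigr)^n\to1 \quad\text{uniformly as } |z|\to1 .
\]

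\emph{Necessity, step 2.} Put $d=\det B\in H^\infty_1$. Because $|d|$ is bounded away from $0$ near the circle, its zeros lie in a compact subset of $\D$, so there are finitely many. Let $b$ be the finite Blaschke product with the same zeros. Then $d/b$ is zero-free, bounded, and satisfies $|d/b|\to1$ uniformly at the boundary. Applying the maximum principle to $d/b$ and to $b/d$ shows $d=\lambda b$ with $|\lambda|=1$. Consequently $|\det B|=1$ a.e. on $\mathbb T$. Since each boundary value $B(\zeta)$ is a contraction on a finite-dimensional space with unimodular determinant, all its singular values equal $1$, so $B(\zeta)$ is unitary and $B$ is two-sided inner.

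\emph{Necessity, step 3 (main obstacle).} This is the Potapov-type factorization: a matrix inner function whose determinant is a finite Blaschke product is a finite Blaschke--Potapov product. I would argue by induction on the number of zeros of $\det B$, counted with multiplicity.
\begin{itemize}
\item If $\det B(a)=0$, let $\HF=\overline{\operatorname{ran}B(a)}\ne\HE$ and $Q=Q(z,a,\HF,\HE)$.
\item On $\HF^\perp$ we have $P_{\HF^\perp}B(a)=0$, so $Q(z)^{-1}B(z)=\bigl(\varphi_a^{-1}P_{\HF^\perp}+P_\HF\bigr)B(z)$ is analytic at $a$.
\item $Q^{-1}B$ is unitary a.e. on $\mathbb T$ and bounded, hence inner. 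Its determinant is $\det B/\varphi_a^{\,\dim\HF^\perp}$, which has strictly fewer zeros.
\end{itemize}
When no zeros remain, $\det$ is a unimodular constant, and the argument of Step 2 shows the remaining inner factor is a constant unitary $U$. This gives $B=U\prod Q(z,a_i,\HF_i,\HE)$.

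The delicate point is verifying boundedness of $Q^{-1}B$ near $a$. This is handled by analyticity of $P_{\HF^\perp}B(z)/\varphi_a(z)$ together with the maximum principle on the circle.

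Note that only the hypothesis on $\HA_\alpha(B)$ is really used. The hypothesis on $\HA_\alpha(B^*)$ is automatic by the sufficiency part.
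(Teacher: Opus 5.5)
The paper does not prove this statement. It quotes it from \cite{GLM26} only as the finite-dimensional contrast for Theorem \ref{anexampleinf}, so there is no in-paper proof to compare with, and your proposal has to stand on its own. It does: it is correct, apart from three small omissions listed at the end.

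Sufficiency is exactly what \cite[Theorems 4.12 and 4.13]{GLM26} give together with Lemma \ref{pureope}(ii). Your unitary reduction is right, since $\HA_\alpha(UB')=U\HA_\alpha(B')$ and $\HA_\alpha((UB')^*)=\HA_\alpha((B')^*)$. For necessity, Theorem \ref{mainGLM} gives $\|B(0)\|<1$. There is no circularity, because that theorem rests only on the Harnack estimate. The closed graph theorem and the diagonal of the kernel inequality then give $I_\HE-B(z)B(z)^*\le C(1-|z|^2)I_\HE$. From there, your determinant argument and the Potapov peeling are the classical route, and each step checks.

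Your route buys two things. First, in finite dimensions the single hypothesis $\HA_\alpha(B)\approx A^2_{\alpha-1,\HE}$, for one $\alpha$, already forces the conclusion: the inclusion $\subseteq$ yields the Blaschke--Potapov structure, and $\supseteq$ yields $\|B(0)\|<1$. Second, it isolates exactly where $\dim\HE<\infty$ enters, namely the bound $|\det B(z)|^2\ge(1-C(1-|z|^2))^n$. This is precisely what fails in the example of Theorem \ref{anexampleinf}. There $I_\HE-B(z)B(z)^*=(1-|z|^2)L(z)L(z)^*\le C(1-|z|^2)I_\HE$ still holds and $B$ is two-sided inner. But $B(a_n)e_n=0$ for infinitely many distinct $a_n$, so no determinant confines the ``zeros'' and the peeling never terminates.

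Three small omissions need filling.
\begin{enumerate}
\item Your induction produces $B=Q_1\cdots Q_mU$, with the unitary on the right. To reach the form (\ref{finiteblapot}), use $Q(z,a,\HF,\HE)U=U\,Q(z,a,U^*\HF,\HE)$.
\item When $\det B'$ is a unimodular constant, the singular-value argument only shows that $B'(z)$ is unitary for each $z\in\D$. Constancy needs one more maximum-principle step. For a unit vector $x$, the function $w\mapsto\langle B'(w)x,B'(0)x\rangle$ has modulus at most $1$ and equals $1$ at $0$, so it is identically $1$, which forces $B'(w)x=B'(0)x$. Equivalently, the space in (\ref{unitarypart}) is all of $\HE$, so $B'$ is its own unitary part.
\item The delicate boundedness of $Q^{-1}B$ is near $\mathbb T$, not near $a$. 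Removability at $a$ is local. Boundedness near the circle comes from $|\varphi_a|\to1$ uniformly. After that, the a.e.\ unitary boundary values give $\|Q^{-1}B\|_\infty\le 1$.
\end{enumerate}
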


We show that the above result can not be extended to the case that $\HE$ is infinite dimensional. Theorem \ref{mainthm2} follows from the following result.
\begin{theorem}\label{anexampleinf}
There exist an
infinite dimensional Hilbert space $\HE$ and a function $B\in H^{\infty}_1(L(\HE))$
such that for every $\alpha>-1$,
\begin{align*}
\HA_\alpha(B) \approx \HA_\alpha(B^*) \approx A_{\alpha-1,\HE},
\end{align*}
while $B$ is not a finite Blaschke-Potapov product. In addition,
$\|B(0)\|<1$ and $B^{\ast}(z)B(z)=B(z)B^{\ast}(z) = I_{\HE}$ for a.e. $\left\vert
z\right\vert =1.$
\end{theorem}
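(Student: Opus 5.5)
We take the infinite-dimensional analogue of a finite Blaschke product with infinitely many distinct zeros. Let $\HE=\ell^2$ with orthonormal basis $\{e_n\}_{n\ge1}$, fix $0<r<1$, choose distinct points $a_n\in\D$ with $|a_n|\le r$, and set
\[
B(z)=\bigoplus_{n\ge1}\varphi_{a_n}(z),\qquad\text{i.e. } B(z)e_n=\varphi_{a_n}(z)e_n .
\]
Then $B\in H^\infty_1(L(\HE))$ and $B(0)=\mathrm{diag}(-a_n)$, so $\|B(0)\|\le r<1$. Each $B(z)$ is diagonal, hence normal, and $|\varphi_{a_n}|=1$ on the circle. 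Therefore $B^*B=BB^*=I_\HE$ a.e. on $\partial\D$.

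\emph{$B$ is not a finite Blaschke-Potapov product.} Suppose $B=U\prod_{i=1}^m Q(z,a_i',\HF_i,\HE)$. For $z\notin\{a_1',\dots,a_m'\}$, each factor $\varphi_{a_i'}(z)(I-P_{\HF_i})+P_{\HF_i}$ is invertible, so $B(z)$ would be invertible. But $B(a_n)e_n=0$ for every $n$, so $B(z)$ fails to be invertible at infinitely many points. This is a contradiction.

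\emph{The lower inclusions.} Since $\|B(0)\|<1$, Theorem \ref{mainGLM} gives directly that $A^2_{\alpha-1,\HE}\subseteq\HA_\alpha(B)$ and $A^2_{\alpha-1,\HE}\subseteq\HA_\alpha(B^*)$.

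\emph{The reverse inclusion for $\HA_\alpha(B^*)$.} Here I would compare operators. For $f=\sum_n f_ne_n\in A^2_{\alpha,\HE}$,
\[
\langle (I-T_B^*T_B)f,f\rangle=\sum_n\int_\D |f_n|^2\bigl(1-|\varphi_{a_n}|^2\bigr)\,dA_\alpha .
\]
We have $1-|\varphi_{a}(z)|^2=(1-|a|^2)(1-|z|^2)/|1-\overline a z|^2\le \frac{1+r}{1-r}(1-|z|^2)$ whenever $|a|\le r$. Hence
\[
I-T_B^*T_B\le \tfrac{1+r}{1-r}\,T_{(1-|z|^2)I_\HE}.
\]
The proof of Theorem \ref{conjecproof} (with $G=I_\HE$) shows $\HM(T_{(1-|z|^2)I_\HE}^{1/2})\approx A^2_{\alpha-1,\HE}$. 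Douglas' lemma then yields $\HA_\alpha(B^*)\subseteq A^2_{\alpha-1,\HE}$.

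\emph{The reverse inclusion for $\HA_\alpha(B)$.} Here I would use kernels. By (\ref{reprokernelaabbs}) and the identity $1-\varphi_a(z)\overline{\varphi_a(w)}=\frac{(1-|a|^2)(1-z\overline w)}{(1-\overline a z)(1-a\overline w)}$, the kernel $F^B_\alpha$ is diagonal with entries
\[
(1-|a_n|^2)\,k_n(z)\overline{k_n(w)}\,\frac{1}{(1-z\overline w)^{\alpha+1}},\qquad k_n(z)=\frac{1}{1-\overline{a_n}z}.
\]
It suffices to bound $k_n(z)\overline{k_n(w)}(1-z\overline w)^{-\alpha-1}\preceq M^2(1-z\overline w)^{-\alpha-1}$ with $M$ independent of $n$. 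This holds if $M$ bounds the multiplier norm of $k_n$ on the scalar space $A^2_{\alpha-1}$ (defined by its kernel even when $\alpha-1\le-1$). Write $k_n=\sum_j \overline{a_n}^{\,j}z^j$. The multiplier norm of $z^j$ on $A^2_{\alpha-1}$ grows at most polynomially in $j$, because the ratios $c_{m,\alpha-1}/c_{m+j,\alpha-1}$ are bounded by a polynomial in $j$. So
\[
\|k_n\|_{\mathrm{mult}}\le\sum_j r^j\,\mathrm{poly}(j)=:M<\infty .
\]
Summing over the diagonal gives $F^B_\alpha\preceq M^2(1-z\overline w)^{-\alpha-1}I_\HE$, hence $\HA_\alpha(B)\subseteq A^2_{\alpha-1,\HE}$.

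\emph{Main obstacle.} The delicate point is the uniformity in $n$ of the constants in these upper inclusions. This is exactly why the zeros are confined to $|a_n|\le r$, and it is the step I would check most carefully, especially the multiplier estimate on $A^2_{\alpha-1}$ in the range $-1<\alpha\le 0$, where $A^2_{\alpha-1}$ is no longer a weighted Bergman space.
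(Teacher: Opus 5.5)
Your proof is correct. It uses the same diagonal example as the paper (which takes $a_n=\frac34-\frac1{n+1}$), but two of its steps follow a genuinely different route.

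For $\HA_\alpha(B^*)\subseteq A^2_{\alpha-1,\HE}$, the paper never bounds $I-T_B^*T_B$ from above. It uses that every $B(z)$ is normal, so $T_B^*T_B-T_BT_B^*=T_{BB^*}-T_BT_B^*=H_{B^*}^*H_{B^*}\ge0$. This gives $\HA_\alpha(B^*)\subseteq\HA_\alpha(B)$ and reduces everything to the $\HA_\alpha(B)$ side. You instead prove an upper counterpart of Lemma \ref{harnackineq}, namely $I_\HE-B(z)^*B(z)\le\frac{1+r}{1-r}(1-|z|^2)I_\HE$, and reuse the identification of $\HM(T_{(1-|z|^2)I_\HE}^{1/2})$ from the proof of Theorem \ref{conjecproof}. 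Your route avoids the Toeplitz--Hankel identity and normality and gives explicit constants. The paper's trick, on the other hand, is a general structural fact: pointwise normal symbols always satisfy $\HA_\alpha(B^*)\subseteq\HA_\alpha(B)$.

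For $\HA_\alpha(B)$, your kernel computation is the paper's factorization $F^B_\alpha(z,w)=L(z)(1-z\overline w)^{-\alpha-1}L(w)^*$ with $L(z)=D_A(I_\HE-zA)^{-1}$, read entry by entry. The paper obtains set equality at once by asserting that $L$ and $L^{-1}$ are bounded multipliers of $A^2_{\alpha-1,\HE}$ because they are analytic near the closed disk. You prove only the upper inclusion, taking the lower one from Theorem \ref{mainGLM}, but you actually justify the multiplier bound via $\|z^j\|_{\mathrm{mult}}^2=\sup_m c_{m,\alpha-1}/c_{m+j,\alpha-1}\lesssim(1+j)^{\max(0,-\alpha)}$. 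That is exactly the detail the paper leaves implicit when $-1<\alpha\le0$.

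Finally, the paper excludes a finite Blaschke--Potapov product by noting that its matrix coefficients would be rational with poles in the finite set $\{1/\overline{b_i}\}$, while $\varphi_{a_n}$ has poles at the distinct points $1/a_n$. Your argument is that $B(z)$ would then be invertible off a finite set, yet $B(a_n)e_n=0$ for infinitely many distinct $a_n$. It is equally short and needs neither $a_n\neq0$ nor $a_n$ real.
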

\begin{proof}
Let $\HE = \ell^2(\mathbb{N})$ with standard orthonormal basis $(e_n)_{n\geq 1}$. Let
\begin{align*}
a_n = \frac{3}{4} - \frac{1}{n+1}, \quad n \geq 1,
\end{align*}
and define the diagonal operator \(A \in L(\mathcal{E})\) by \(Ae_n = a_n e_n, n \geq 1\). Then \(a_n > 0\) and
\[
\|A\| = \sup_{n \geq 1} a_n = \frac{3}{4} < 1.
\]
Define
\[
B(z) = (zI_{\mathcal{E}} - A)(I_{\mathcal{E}} - zA)^{-1}, \quad z \in \mathbb{D}.
\]
That is,
\[
B(z)e_n = \varphi_{a_n}(z)e_n, \quad \varphi_{a_n}(z) = \frac{z - a_n}{1 - a_n z}.
\]
Since \(\|A\| < 1\), we have \(B\) is analytic in the operator norm on a neighborhood of the
closed unit disk. Moreover,
\[
\|B(z)\| = \sup_{n \geq 1} |\varphi_{a_n}(z)| \leq 1, \quad z \in \mathbb{D}.
\]
Thus \(B \in H_1^\infty(L(\mathcal{E}))\). We also have
\[
B(0) = -A, \quad \|B(0)\| = \frac{3}{4} < 1.
\]
For every \(\zeta \in \mathbb{T}\) and every \(n \geq 1\), we have \(|\varphi_{a_n}(\zeta)| = 1\). So
\[
B(\zeta)^* B(\zeta) = B(\zeta) B(\zeta)^* = I_{\mathcal{E}}, \quad \zeta \in \mathbb{T}.
\]
Notice also that \(B(z)\) is normal for every \(z \in \mathbb{D}\).

We next identify \(\mathcal{A}_\alpha(B)\). First observe that \(B(w)^* = (I_{\mathcal{E}} - \overline{w}A)^{-1}(\overline{w}I_{\mathcal{E}} - A)\). So
\begin{align*}
& (I_{\mathcal{E}} - zA)(I_{\mathcal{E}} - B(z)B(w)^*)(I_{\mathcal{E}} - \overline{w}A) \\
&= (I_{\mathcal{E}} - zA)(I_{\mathcal{E}} - \overline{w}A) - (zI_{\mathcal{E}} - A)(\overline{w}I_{\mathcal{E}} - A) \\
&= I_{\mathcal{E}} + z\overline{w}A^2 - z\overline{w}I_{\mathcal{E}} - A^2 \\
&= (1 - z\overline{w})(I_{\mathcal{E}} - A^2) \\
&= (1 - z\overline{w})D_A^2,
\end{align*}
where $D_A = (I_{\mathcal{E}} - A^2)^{1/2}$. It then follows that
\[
I_{\mathcal{E}} - B(z)B(w)^* = (1 - z\overline{w})(I_{\mathcal{E}} - zA)^{-1}D_A^2(I_{\mathcal{E}} - \overline{w}A)^{-1}.
\]
Let $L(z) = D_A(I_{\mathcal{E}} - zA)^{-1}$, then
\[
I_{\mathcal{E}} - B(z)B(w)^* = (1 - z\overline{w})L(z)L(w)^*.
\]
Therefore, the reproducing kernel of $\mathcal{A}_\alpha(B)$ is
\[
\begin{aligned}
F_\alpha^B(z,w) &= \frac{I_{\mathcal{E}} - B(z)B(w)^*}{(1 - z\overline{w})^{\alpha+2}} \\
&= L(z)\frac{I_{\mathcal{E}}}{(1 - z\overline{w})^{\alpha+1}}L(w)^*.
\end{aligned}
\]
Note that both $L$ and its inverse $L(z)^{-1} = (I_{\mathcal{E}} - zA)D_A^{-1}$ are analytic in the operator norm on a neighborhood of the closed unit disk. Hence for every $\alpha > -1$, they are bounded multipliers of $A_{\alpha-1,\mathcal{E}}^2$. It then follows that
\[
\mathcal{A}_\alpha(B) = L A_{\alpha-1,\mathcal{E}}^2 \approx A_{\alpha-1,\mathcal{E}}^2.
\]
Since $B(z)$ is normal for every $z\in\D$, by (\ref{toephanlident}), we have
\begin{align*}
T_B^* T_B - T_B T_B^* &= T_{B^* B} - T_B T_B^* \\
&= T_{B B^*} - T_B T_B^* = H_{B^*}^* H_{B^*} \geq 0.
\end{align*}
So \(I - T_B T_B^* \geq I - T_B^* T_B\) and \(\mathcal{A}_\alpha(B^*) \subseteq \mathcal{A}_\alpha(B)\). Note that \(\|B(0)\| < 1\). Thus by Theorem \ref{mainGLM} and \(\mathcal{A}_\alpha(B) \approx A_{\alpha-1, \mathcal{E}}^2\), we obtain that
\[
A_{\alpha-1, \mathcal{E}}^2 \subseteq \mathcal{A}_\alpha(B^*) \subseteq \mathcal{A}_\alpha(B) \approx A_{\alpha-1, \mathcal{E}}^2.
\]
Thus
\[
\mathcal{A}_\alpha(B^*) \approx \mathcal{A}_\alpha(B) \approx A_{\alpha-1, \mathcal{E}}^2.
\]

It remains to prove that \(B\) is not a finite Blaschke-Potapov product. Suppose to the contrary that
\[
B(z) = U \prod_{i=1}^N Q(z, b_i, F_i, \mathcal{E})
\]
for some unitary \(U\), \(b_i \in \mathbb{D}\), and closed subspaces \(F_i \subseteq \mathcal{E}\). Each factor can be written as
\[
Q(z, b_i, F_i, \mathcal{E}) = \frac{(z - b_i)P_{F_i^\perp} + (1 - \overline{b_i}z)P_{F_i}}{1 - \overline{b_i}z}.
\]
It follows that every scalar matrix coefficient
\[
z \longmapsto \langle B(z)x, y \rangle, \quad x, y \in \mathcal{E},
\]
is a rational function whose poles belong to the finite set
\[
\left\{ \frac{1}{\overline{b_1}}, \cdots, \frac{1}{\overline{b_N}} \right\},
\]
with the terms corresponding to \(b_i = 0\) omitted.

On the other hand,
\[
 \langle B(z)e_n,e_n\rangle
 =\varphi_{a_n}(z)
 =\frac{z-a_n}{1-a_nz},
\]
which has a pole at $z=1/a_n$. Since the numbers $a_n$ are distinct, these matrix coefficients have infinitely many
distinct poles. This is a contradiction. Therefore $B$ is not a finite Blaschke-Potapov product.
\end{proof}

\

\noindent \textbf{Acknowledgement}:
S. Luo was supported by National Science Foundation of China (No.12271149), Natural Science Foundation of Hunan Province (No.2024JJ2008).

\end{document}